\newtheorem{thm}{Theorem}[section]
\newtheorem{lem}[thm]{Lemma}
\newtheorem{prop}[thm]{Proposition}
\theoremstyle{definition}
\newtheorem{rem}[thm]{Remark}
\newtheorem*{defn*}{Definition}
\newtheorem*{thm*}{Theorem}
\newtheorem*{prop*}{Proposition}
\newtheorem*{cor*}{Corollary}
\newtheorem*{question*}{Question}
\newcommand{\R}{\mathbb{R}}
\DeclareMathOperator{\Index}{index}
\DeclareMathOperator{\nullity}{nullity}
\newcommand{\nul}{\operatorname{nul}}
\begin{document}
 

\title{On the index of minimal surfaces with free boundary in a half-space}

\address{Department of Mathematics, Stanford University, 450 Jane Stanford Way, Bldg
380, Stanford, CA 94305}

\author{Shuli Chen}
\email{shulic@stanford.edu}

\begin{abstract}
We study the Morse index of minimal surfaces with free boundary in a half-space. We improve previous estimates relating the Neumann index to the Dirichlet index and use this to answer a question of Ambrozio, Buzano, Carlotto, and Sharp concerning the non-existence of index two embedded minimal surfaces with free boundary in a half-space. We also give a simplified proof of a result of Chodosh and Maximo concerning lower bounds for the index of the Costa deformation family. 
\end{abstract}

 \maketitle

\section{Introduction}
Given an orientable Riemannian $3$-manifold $M^3$, a minimal surface is a  critical point of the area functional. For minimal surfaces in $\R^3$, the maximum principle implies that they must be non-compact.
Hence, minimal surfaces in $\R^3$ are naturally studied under some weaker finiteness assumption,
such as finite total curvature or finite Morse index. We use the word \emph{bubble} to denote a complete, connected, properly embedded minimal surface of finite total curvature in $\R^{3}$. 
As shown by Fischer-Colbrie \cite{fischer1985complete} and Gulliver--Lawson \cite{gulliver1986structure,gulliver1986index},  a complete oriented minimal surface in $\R^3$ has finite index if and
only if it has finite total curvature. In particular, bubbles have finite Morse index.
Bubbles of low index are classified: the plane is the only bubble of index 0 \cite{fischer1980structure,do1979stable,pogorelov1981stability}, the catenoid is the only bubble of index 1 \cite{lopez1989complete}, and there is no bubble of index 2 or 3 \cite{chodosh2016topology,chodosh2018topology}.

Similarly, given an orientable Riemannian $3$-manifold $M^3$ with boundary $\partial M$, a free boundary minimal surface is a critical point of the area functional
among all submanifolds with boundaries in $\partial M$. We use the word \emph{half-bubble} to denote a complete, connected, properly embedded minimal surface of finite curvature that is contained in a half-space of $\R^3$ and has non-empty free boundary with respect to the boundary of this half-space.
Half-bubbles arise as blow-up limits of certain sequences of free boundary minimal surfaces in compact Riemannian $3$-manifolds with boundary, as studied in \cite{ambrozio2018compactness,ambrozio2019bubbling}.

Using reflection across the bounding plane we obtain a symmetric bubble from a half-bubble, so results on the index of bubbles allow us to obtain bounds for the index of half-bubbles. In particular, half-bubbles have finite Morse index.
In \cite{ambrozio2019bubbling}, the authors introduced the notion of Morse index with Dirichlet conditions for a half-bubble, and observed that the usual (Neumann) Morse index is lower bounded by the Dirichlet index. This implies the Morse index of a half-bubble is at least half the index of the corresponding symmetric bubble. Using this, they showed the half-plane is the only half-bubble of index 0 and the half-catenoid is the only half-bubble of index 1. They then asked whether there exists a half-bubble of index 2.

In this paper, we observe that the (Neumann) index of a half-bubble can be bounded from below by the \emph{sum} of the Dirichlet index and the Dirichlet nullity. Moreover, the Dirichlet nullity is always at least one-dimensional. 
Therefore we obtain an improved inequality between the index of a half-bubble and the corresponding symmetric bubble:
\begin{thm}\label{thm: index comparison}
Let $\Sigma$ be a half-bubble that is neither a half-plane or a half-catenoid, and let $\check{\Sigma}$ be the corresponding symmetric bubble.
Then 
$$\Index(\Sigma) \le \Index(\check{\Sigma}) \le 2\Index(\Sigma) -1.$$
If, in addition, all the ends of $\check{\Sigma}$ are orthogonal to the bounding plane, then we have
$$\Index(\Sigma) + 1\le \Index(\check{\Sigma}) \le 2\Index(\Sigma) -2.$$
\end{thm}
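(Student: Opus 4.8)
The plan is to transfer everything to a comparison, on the half-bubble $\Sigma$ itself, between its free-boundary (Neumann) index and its Dirichlet index. Write $\check\Sigma = \Sigma \cup \sigma(\Sigma)$, where $\sigma$ is the reflection across the bounding plane $\Pi$. The Jacobi operator $L=\Delta + |A|^2$ commutes with $\sigma$, so the index form on $\check\Sigma$ splits orthogonally into its $\sigma$-even and $\sigma$-odd parts; since $\Sigma$ meets $\Pi$ orthogonally, the fixed locus of $\sigma|_{\check\Sigma}$ is exactly $\partial\Sigma$, and restriction to $\Sigma$ identifies the even part with the free-boundary problem and the odd part with the Dirichlet problem. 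Hence
\begin{equation*}
\Index(\check\Sigma) = \Index(\Sigma) + \Index_D(\Sigma), \qquad \nul(\check\Sigma) = \nul(\Sigma) + \nul_D(\Sigma),
\end{equation*}
where $\Index_D,\nul_D$ are the Dirichlet index and nullity. (On the non-compact surfaces these are finite by finiteness of total curvature and are computed on a compact exhaustion; the geometric Jacobi fields used below are admissible in the relevant sense, which I will use without further comment.) Thus the first inequality of the theorem becomes $\Index_D(\Sigma)\ge 0$ together with $\Index(\Sigma) \ge \Index_D(\Sigma)+1$, and the second becomes $\Index_D(\Sigma)\ge 1$ together with $\Index(\Sigma) \ge \Index_D(\Sigma)+2$.

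The main analytic input is the improved inequality $\Index(\Sigma)\ge \Index_D(\Sigma)+\nul_D(\Sigma)$. I would prove it by a test-function argument. Let $E=E_-\oplus E_0$ be the span of the Dirichlet eigenfunctions of $L$ on $\Sigma$ of eigenvalue $\le 0$, of dimension $\Index_D+\nul_D$, on which $Q$ is $\le 0$ and vanishes exactly on the Dirichlet nullspace $E_0$. For a Dirichlet Jacobi field $\phi_i\in E_0$, unique continuation from the boundary forces $\partial_\nu\phi_i\ne 0$ on $\partial\Sigma$, and in fact the normal derivatives of a basis of $E_0$ are linearly independent in $L^2(\partial\Sigma)$; choosing $v_i\in H^1(\Sigma)$ with prescribed boundary traces so that $\int_{\partial\Sigma}v_j\,\partial_\nu\phi_i=-\delta_{ij}$, one gets $B(\phi_i,v_j)=-\delta_{ij}$, while $B(\psi_p,\phi_i)=0$ for $\psi_p\in E_-$ by $L^2$-orthogonality of distinct Dirichlet eigenfunctions. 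Then $E_-\oplus \operatorname{span}\{\phi_i+\epsilon v_i\}$ is an $(\Index_D+\nul_D)$-dimensional subspace of $H^1(\Sigma)$ on which $Q$ is negative definite for small $\epsilon>0$ (a Schur-complement estimate on the Gram matrix). Finally $\nul_D(\Sigma)\ge 1$ always: because $\Sigma$ meets $\Pi=\{x_3=0\}$ orthogonally, $\langle N,e_3\rangle$ vanishes along $\partial\Sigma$, and it vanishes identically only for the half-plane. Combining, $\Index(\Sigma)\ge \Index_D(\Sigma)+1$, which together with the first display gives the first assertion of the theorem.

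For the second assertion the extra hypothesis enters through the same Jacobi field $u:=\langle N,e_3\rangle$: at an end of $\check\Sigma$ orthogonal to $\Pi$ the limiting normal is horizontal, so $u$ decays there, hence $u$ is genuinely admissible on $\Sigma$ and may be used as an honest test function. If $u$ were one-signed on $\operatorname{int}\Sigma$ it would be a positive Dirichlet Jacobi field, putting the Gauss map of $\operatorname{int}\Sigma$ into an open hemisphere; by the Gauss-map classification this forces $\Sigma$ to be a half-plane or half-catenoid, contrary to hypothesis. So $u$ changes sign in $\operatorname{int}\Sigma$; its nodal domains are proper subdomains of $\Sigma$ each with first Dirichlet eigenvalue $0$ for $L$, and strict domain monotonicity of Dirichlet eigenvalues gives $\lambda_1^D(\Sigma)<0$, i.e. $\Index_D(\Sigma)\ge 1$ — the lower bound $\Index(\check\Sigma)\ge \Index(\Sigma)+1$. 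For the upper bound one needs the strengthened inequality $\Index(\Sigma)\ge \Index_D(\Sigma)+2$, equivalently one more negative direction for the free-boundary form than is produced by the Dirichlet eigenfunctions of eigenvalue $\le 0$; I expect this to come from exploiting the sign-changing admissible Dirichlet Jacobi field $u$ to produce an extra "Steklov crossing" as one interpolates along the Robin family from the Dirichlet to the free-boundary condition — feeding the truncations $u_\pm=\max(\pm u,0)$, which satisfy $Q(u_\pm)=0$ and are $L^2$-controlled along the negative Dirichlet eigenspace, into the perturbation argument above.

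I expect that last step — the strengthened inequality $\Index(\Sigma)\ge \Index_D(\Sigma)+2$ for the upper bound — to be the main obstacle, because $u_\pm$ are not Jacobi fields and have nonzero $L^2$-pairing with the negative Dirichlet eigenspace, so the naive extension of the perturbation argument does not close; getting the extra dimension seems to require either producing a second admissible Dirichlet Jacobi field or a careful spectral-flow bookkeeping for the Robin family between the two boundary conditions. The remaining delicate points are routine by comparison: the function-analytic setup on the non-compact surfaces (admissibility of the geometric Jacobi fields, validity of integration by parts and of domain monotonicity there), and the Gauss-map classification step ruling out a one-signed $\langle N,e_3\rangle$.
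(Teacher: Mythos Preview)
Your overall architecture matches the paper: split $\Index(\check\Sigma)=\Index(\Sigma)+\Index_\bullet(\Sigma)$ via even/odd decomposition, and use the refined inequality $\Index(\Sigma)\ge \Index_\bullet(\Sigma)+\nul_\bullet(\Sigma)$ together with lower bounds on the Dirichlet nullity. Your sketch of that inequality is essentially a direct proof of the case of Tran's theorem that the paper cites, and your $\nul_\bullet\ge 1$ via $\langle N,e_\perp\rangle$ is exactly the paper's argument. So the first assertion is fine and on the same track.

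The gap is in the second assertion, and you have correctly located it but not the fix. You try to squeeze everything out of the single odd Jacobi field $u=\langle N,e_\perp\rangle$: a sign-change/nodal-domain argument for $\Index_\bullet\ge 1$, and a hoped-for ``extra Steklov crossing'' using $u_\pm$ for the upper bound. The first step leans on a ``Gauss-map classification'' (one-signed $u$ forces half-plane or half-catenoid) that is not a standard statement and would itself need proof; the second step you already flag as not closing. The paper sidesteps both difficulties with a single observation you are missing: in the case where the ends of $\check\Sigma$ are orthogonal to $\Pi$, there is a \emph{second} bounded odd Jacobi field, the rotational field
\[
\phi(p)=\det(p,N(p),\lambda),
\]
where $\lambda$ is the common direction of the ends. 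Since the ends are orthogonal to $\Pi$, $\lambda$ is parallel to $\Pi$, and a direct parity check shows $\phi\circ\sigma=-\phi$; moreover $\phi$ is bounded and is linearly independent from the translational fields whenever $\check\Sigma$ is neither a plane nor a catenoid. Hence $\nul_\bullet(\Sigma)\ge 2$.

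With that in hand both remaining inequalities are immediate and require no new analysis: the same inequality $\Index(\Sigma)\ge \Index_\bullet(\Sigma)+\nul_\bullet(\Sigma)$ now gives $\Index(\Sigma)\ge \Index_\bullet(\Sigma)+2$, which is your ``strengthened'' upper bound; and $\nul_\bullet(\Sigma)\ge 2$ forces $\Index_\bullet(\Sigma)\ge 1$ simply because the first Dirichlet eigenvalue on the compactified half $\overline\Omega$ is simple, so two independent Dirichlet Jacobi fields cannot both sit at the bottom of the spectrum. No Gauss-map classification, no $u_\pm$ perturbation, and no Robin spectral flow are needed.
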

Using this, we conclude
\begin{thm}\label{thm: no index two}
There doesn't exist two-dimensional half-bubbles whose Morse index equals two.
\end{thm}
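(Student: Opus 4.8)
The plan is to obtain Theorem~\ref{thm: no index two} as a short consequence of Theorem~\ref{thm: index comparison} combined with the known classification of bubbles of small index. I would argue by contradiction: suppose $\Sigma$ is a (two-dimensional) half-bubble with $\Index(\Sigma)=2$. Since the half-plane is the unique half-bubble of index $0$ and the half-catenoid is the unique half-bubble of index $1$ (proved in \cite{ambrozio2019bubbling}), such a $\Sigma$ is neither a half-plane nor a half-catenoid, so the hypotheses of Theorem~\ref{thm: index comparison} hold for $\Sigma$.

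Let $\check\Sigma$ denote the symmetric bubble obtained by reflecting $\Sigma$ across its bounding plane. First I would confirm that $\check\Sigma$ really is a bubble in the sense of the introduction: the free boundary (orthogonality) condition makes the doubled surface smooth across the plane; completeness, proper embeddedness, and finite total curvature pass to the finite-sheeted reflection; and $\check\Sigma$ is connected because $\partial\Sigma\neq\emptyset$, so the two reflected copies are joined along a nonempty curve. Now applying Theorem~\ref{thm: index comparison} with $\Index(\Sigma)=2$ gives
$$2=\Index(\Sigma)\le \Index(\check\Sigma)\le 2\Index(\Sigma)-1=3,$$
so $\Index(\check\Sigma)\in\{2,3\}$. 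But by the theorem of Chodosh and Maximo \cite{chodosh2016topology,chodosh2018topology} there is no bubble of index $2$ or $3$ --- a contradiction. Hence no two-dimensional half-bubble has Morse index two.

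The genuine difficulty has already been handled inside Theorem~\ref{thm: index comparison}: the upgraded bound $\Index(\check\Sigma)\le 2\Index(\Sigma)-1$ (coming from the fact that the Neumann index dominates the sum of the Dirichlet index and the Dirichlet nullity, the latter being at least one-dimensional) is exactly what confines $\Index(\check\Sigma)$ to $\{2,3\}$; the weaker inequality $\Index(\check\Sigma)\le 2\Index(\Sigma)$ available beforehand would only give $\Index(\check\Sigma)\le 4$, and the non-existence of index-$4$ bubbles is not known. Within the present deduction, the only point requiring any care is verifying that $\check\Sigma$ meets all the hypotheses of the Chodosh--Maximo non-existence result, and I do not expect this to pose a real obstacle, as every needed property is a standard feature of the reflection construction. (One could sharpen the argument using the second inequality of Theorem~\ref{thm: index comparison} when the ends of $\check\Sigma$ are orthogonal to the bounding plane, but this is unnecessary here, since the first inequality already suffices.)
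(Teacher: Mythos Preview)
Your proposal is correct and follows essentially the same route as the paper: apply Theorem~\ref{thm: index comparison} to a hypothetical index-two half-bubble to force $\Index(\check\Sigma)\in\{2,3\}$, then invoke the Chodosh--Maximo non-existence results. The only difference is that you explicitly verify the hypothesis of Theorem~\ref{thm: index comparison} (that $\Sigma$ is neither a half-plane nor a half-catenoid) and that $\check\Sigma$ is a genuine bubble, whereas the paper leaves these checks implicit.
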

\begin{proof}
If there exists a half-bubble of index two, then by Theorem~\ref{thm: index comparison}, the index of the symmetric bubble $\check{\Sigma}$ is bounded by 
$$2 = \Index(\Sigma) \le \Index(\check{\Sigma}) \le 2\Index(\Sigma) -1 = 3.$$

However, by \cite[Theorem 2]{chodosh2016topology} and \cite[Theorem 1.7]{chodosh2018topology}, there doesn't exist a bubble in $\R^3$ of index two or three.
\end{proof}

As a result, the geometric convergence results and blow-up analysis in \cite[Theorem 8 and Theorem 9]{ambrozio2019bubbling} can be extended to sequences of free boundary minimal surfaces of index at most two.  We also note that our analysis of the index and nullity with Dirichlet and Neumann boundary conditions shares some features in common with the arguments in \cite{kapouleas2019index}.

Using the same observation, we also give a shorter proof of an result of \cite{chodosh2018topology} which states the Costa family has Morse index at least four. We further discuss about the index of complete embedded free boundary minimal surface contained in a quarter-space of $\R^3$.

\subsection{Acknowledgments}
I am grateful to my advisor, Otis Chodosh, for bringing this problem to my attention, for his continuous support and encouragement, and for many helpful comments on earlier drafts of this paper. I also want to thank Davi Maximo for his interest in this work. 

\section{Results on indices of bubbles and half-bubbles}
As in the introduction, we use the word bubble to denote a complete, connected, properly embedded minimal surface of finite total curvature in $\R^{3}$; we use the word half-bubble to denote a complete, connected, properly embedded minimal surface of finite curvature that is contained in a half-space of $\R^3$ and has non-empty free boundary with respect to the boundary of this half-space.

Let $\Pi \subset \R^3$ be a plane and $\Pi_{-}$, $\Pi_{+}$ be the closed half-spaces bounded by $\Pi$. If $X^2 \subset \R^3$ is a bubble symmetric under the reflection across $\Pi$, then $\Sigma : = X \cap \Pi_-$ is a half-bubble. Conversely, if $\Sigma^2 \subset \Pi_-$ is a half-bubble, then by reflecting $\Sigma$ across $\Pi$ we get a minimal surface $\check{\Sigma}$ without boundary, which is smooth by standard elliptic theory. Thus $\check{\Sigma}$ is a bubble symmetric under reflection across $\Pi$. 

As shown in \cite{ambrozio2019bubbling}, for the symmetric bubble $\check{\Sigma}$ we have that either 
\begin{enumerate}[(a)]
\item each of its ends is contained in one of the half-spaces $\Pi_-$ and $\Pi_+$ (provided we remove from $\check{\Sigma}$ a sufficiently large ball centered at the origin) and is parallel to $\Pi$, or 
\item all of the ends intersect $\Pi$ and are orthogonal to $\Pi$.
\end{enumerate}
We will refer to these two cases as case (a) and (b), respectively. We will say a half-bubble satisfies case (a) or (b) if the corresponding symmetric bubble satisfies case (a) or (b), respectively.

\subsection{Index and nullity of a bubble}
For a bubble $X \subset \R^3$, we consider the Jacobi operator $L := - \Delta - |\nabla N|^2$ and the associated quadratic form
$$Q_{X}(f,f) := \int_X(|\nabla f|^2 - |\nabla N|^2f^2)dA,$$
where $N\colon X \to S^2$ denotes the Gauss map.
We define the Morse index of $X$, $\Index(X)$, as the largest dimension
of a linear subspace of $C^\infty_c (X)$ where $Q_{X}$ is negative definite. 

We call a $C^2$-function $f\colon X \to \R$ a \emph{Jacobi field} if it satisfies $Lf = 0$ on $X$. Let $\mathcal{K}(X)$ be the space of bounded Jacobi fields on $X$. We define the nullity of $X$, $\nul(X)$, as the dimension of $\mathcal{K}(X)$. Inside $\mathcal{K}(X)$
lies the subspace of linear functions $L(X) = \{\langle v,N\rangle \mid v \in \R^3\}$ generated by translations. 
The embeddedness of $X$ guarantees that the ends of $X$ are all parallel to some line $\ell$. Then rotation around $\ell$ 
generates a bounded Jacobi field $ \det(p, N, \lambda)$ , where $p$ is the position vector of $X$, and $\lambda$ is a unit vector on $\ell$ \cite{montiel1991schrodinger}. 
If $X$ is neither the plane nor the catenoid, then $\det(p, N, \lambda)$ is a nonzero Jacobi field not contained in $L(X)$, so $\mathcal{K}(X)$ is at least 4-dimensional in this case.

If $X$ is further assumed to be symmetric under the reflection across $\Pi$, then we can decompose functions on $X$ into even and odd functions.
We then define the odd Morse index $\Index_-(X)$ (resp. the even Morse index $\Index_+(X)$) as the largest dimension
of a linear subspace of $(C^\infty_c)_- (X)$ , the space of odd $C^\infty_c$ functions on $X$ (resp. $(C^\infty_c)_+ (X)$, the space of even $C^\infty_c$ functions on $X$) where $Q_{X}$ is negative definite. We also define the odd nullity $\nul_-(X)$ (resp. the even nullity $\nul_+(X)$) as the dimension of bounded odd (resp. even) Jacobi functions on $X.$
As shown in \cite[Lemma 16]{ambrozio2019bubbling}, 
$$\Index(X) = \Index_-(X) + \Index_+(X),$$
and it is clear that
$$\nul(X) = \nul_-(X) + \nul_+(X).$$

We can give some preliminary bounds on the even and odd nullities:
\begin{lem}\label{lem: bubble nullity}
Let $X$ be a bubble symmetric under reflection across $\Pi$, then $\nul_-(X) \ge 1$ and $\nul_+(X) \ge 2$. 

If we further assume that $X$ is neither a plane nor a catenoid, then in case (a), $\nul_-(X) \ge 1$ and $\nul_+(X) \ge 3$; in case (b),  
$\nul_-(X) \ge 2$ and $\nul_+(X) \ge 2$.
\end{lem}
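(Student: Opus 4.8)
The plan is to exhibit explicit bounded Jacobi fields with a prescribed parity with respect to the reflection across $\Pi$, and count them. The basic Jacobi fields on any bubble $X$ come from two sources: the linear functions $L(X) = \{\langle v, N\rangle \mid v \in \R^3\}$ generated by translations, and, when $X$ is neither a plane nor a catenoid, the rotational field $R := \det(p, N, \lambda)$, where $\ell$ is the common asymptotic line of the ends and $\lambda$ a unit vector along $\ell$. The whole argument is bookkeeping about how the reflection $\sigma$ across $\Pi$ acts on these fields, split according to case (a) and case (b).

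First I would set up the action of $\sigma$ on $L(X)$. Write $\Pi = \{x : \langle x, \nu\rangle = 0\}$ for a unit normal $\nu$, so $\sigma(x) = x - 2\langle x,\nu\rangle \nu$. Since $X$ is $\sigma$-invariant, $N \circ \sigma = -d\sigma \cdot N$ (the reflection reverses orientation of the normal bundle along the fixed locus, but reflects it as a vector), and a short computation shows that for $v \in \R^3$ the function $\langle v, N\rangle$ is even if $v$ is tangent to $\Pi$ (i.e. $\langle v,\nu\rangle = 0$) and odd if $v = \nu$. Hence $L(X)$ contributes a $2$-dimensional space of even linear Jacobi fields (from the two directions tangent to $\Pi$) and a $1$-dimensional space of odd linear Jacobi fields (from $\nu$), giving at once $\nul_+(X) \ge 2$ and $\nul_-(X) \ge 1$. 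These linear fields are genuinely nonzero and linearly independent because the Gauss map of a non-planar bubble is non-constant, so its image spans (indeed, is dense in) $S^2$.

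Next I would bring in the rotational field $R = \det(p, N, \lambda)$ under the extra hypothesis that $X$ is neither a plane nor a catenoid, so that $R$ is a nonzero bounded Jacobi field not lying in $L(X)$ (as recalled in the excerpt). I need its parity, which depends on the position of $\ell$ relative to $\Pi$. In case (b) the ends are orthogonal to $\Pi$, so $\ell \perp \Pi$, i.e. $\lambda = \pm\nu$; then $\sigma$ commutes with rotation about $\ell$, and chasing signs through $\det(\sigma p, -d\sigma N, \pm\nu)$ shows $R$ is \emph{odd}. Combined with the linear fields this gives $\nul_-(X) \ge 2$ and $\nul_+(X) \ge 2$ in case (b). In case (a) the ends are parallel to $\Pi$, so $\ell \subset \Pi$ (after the translation implicit in the structure theorem), i.e. $\lambda \perp \nu$; now the same sign chase gives that $R$ is \emph{even}, so $\nul_+(X) \ge 3$ and $\nul_-(X) \ge 1$ in case (a). One must check that $R$ is linearly independent from the even (resp. odd) part of $L(X)$: this follows because $R \notin L(X)$ at all, together with the fact that a field that is purely even cannot coincide with a purely odd one unless both vanish, so the parity-homogeneous pieces remain independent. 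This completes all four inequalities.

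The main obstacle is the careful sign/parity computation for $R$ in the two cases: one has to be precise about how $d\sigma$ acts on the normal $N$ along $X$, how the position vector $p$ decomposes relative to $\Pi$ (and whether the structure theorem of \cite{ambrozio2019bubbling} lets us assume $\ell$ passes through the origin, which matters for $\det(p,N,\lambda)$ being exactly odd or even rather than odd/even modulo a linear field), and to keep the determinant's multilinearity bookkeeping straight. A secondary point is to confirm nondegeneracy — that these explicit fields are actually nonzero and independent — which I would handle by the remarks already in the excerpt (the Gauss map is non-constant, and $R \notin L(X)$ when $X$ is neither plane nor catenoid). Everything else is linear algebra.
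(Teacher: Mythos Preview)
Your overall strategy is exactly the paper's: exhibit the linear Jacobi fields $\langle v,N\rangle$ and the rotational field $\det(p,N,\lambda)$, and sort them by parity under $\sigma$. Your final conclusions are also correct. However, two of your intermediate assertions are wrong in ways that happen to cancel, so the argument as written does not actually justify the result.

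First, the transformation law $N\circ\sigma = -\,d\sigma\cdot N$ is incorrect. At any point of $X\cap\Pi$ (non-empty by the free-boundary hypothesis) the normal $N$ is tangent to $\Pi$, so $d\sigma\cdot N = N$ there, while $\sigma$ fixes the point; hence the sign is $+1$. (Indeed, with your stated sign the field $\langle v,N\rangle$ for $v\in\Pi$ would come out \emph{odd}, contradicting what you correctly assert one line later.) Second, you have the direction of $\lambda$ relative to $\Pi$ swapped between cases (a) and (b). The line $\ell$ is the common limiting \emph{normal} direction of the ends (that is what makes rotation about $\ell$ give a bounded Jacobi field), so in case (a), where the ends are asymptotic to planes parallel to $\Pi$, one has $\lambda\perp\Pi$; in case (b), where the asymptotic planes are orthogonal to $\Pi$, one has $\lambda\in\Pi$ --- the opposite of what you wrote. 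The paper sidesteps the abstract sign chase by fixing $\Pi$ as the $yz$-plane, writing $N(\sigma(x)) = (-n_1,n_2,n_3)$, and computing the $3\times 3$ determinant directly: with $\lambda = e_1$ (case (a)) one gets $\phi = x_2 n_3 - x_3 n_2$, which is even, and with $\lambda = e_3$ (case (b)) one gets $\phi = x_1 n_2 - x_2 n_1$, which is odd. You were right that the sign bookkeeping is the main hazard here; it just did not come out cleanly.
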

\begin{proof}
After a Euclidean motion we can take $\Pi$ to be the $yz$-plane. Under the reflection $\sigma$ across $\Pi$, the normal vector field $N(x) = (n_1,n_2,n_3)$ on $X$ becomes $N(\sigma(x)) = (-n_1,n_2,n_3)$. Thus $\langle e_1, N \rangle$ is an odd bounded Jacobi field while $\langle e_2, N \rangle$, $\langle e_3, N \rangle$ are even bounded Jacobi fields. This shows $\nul_-(X) \ge 1$ and $\nul_+(X) \ge 2$.

If we further assume that $X$ is neither a plane nor a catenoid, then $\phi: =\det(p, N, \lambda)$ is also a nonzero bounded Jacobi field, where $p$ is the position vector of $X$, and $\lambda$ is a unit vector parallel to the ends.  

In case (a), $\lambda$ is orthogonal to $\Pi$, and we can take it to be $e_1$. Then 
\begin{align*}
 (\phi \circ \sigma) (x) &= \det(p (\sigma(x)), N (\sigma(x)) ,e_1) \\
 &= \begin{vmatrix}-x_1 & x_2 & x_3 \\-n_1 & n_2 & n_3 \\ 1 & 0 & 0 \end{vmatrix} \\
 &=  \begin{vmatrix}x_1 & x_2 & x_3 \\n_1 & n_2 & n_3 \\ 1 & 0 & 0 \end{vmatrix} \\
 & = \det(p(x), N(x),e_1) = \phi(x),   
\end{align*}
so $\phi$ is an even Jacobi field. Therefore $\nul_-(X) \ge 1$ and $\nul_+(X) \ge 3$.

In case (b), $\lambda$ is parallel to $\Pi$. After a rotation around the $x$-axis we can take $\lambda$ to be $e_3$. Then 
\begin{align*}(\phi \circ \sigma) (x) &= \det(p (\sigma(x)), N (\sigma(x)) ,e_3) \\
&= \begin{vmatrix}-x_1 & x_2 & x_3 \\-n_1 & n_2 & n_3 \\0  & 0 & 1 \end{vmatrix} \\
& =  -\begin{vmatrix}x_1 & x_2 & x_3 \\n_1 & n_2 & n_3 \\ 0 & 0 & 1 \end{vmatrix} \\
& = -\det(p(x), N(x),e_3)  = -\phi(x),\end{align*}

so $\phi$ is an odd Jacobi field. Therefore $\nul_-(X) \ge 2$ and $\nul_+(X) \ge 2$.
\end{proof}

\subsection{Index and nullity of a half-bubble} Similarly, on a half-bubble $\Sigma \subset \Pi_-$ we can also consider the Jacobi operator $L = - \Delta - |\nabla N|^2$ and the associated quadratic form
$Q_{\Sigma}(f,f)$.
We define the Morse index of $\Sigma$, $\Index(\Sigma)$, as the largest dimension
of a linear subspace of $C^\infty_c (\Sigma)$ where $Q_{\Sigma}$ is negative definite. This agrees with the usual definition of the Morse index of a free boundary minimal surface. Notice no condition along $\partial \Sigma$ is imposed here. Because of the boundary condition we also call it the Neumann index of $\Sigma$. 
We also define the Morse index of $\Sigma$ with Dirichlet boundary conditions, $\Index_\bullet(\Sigma)$, as the largest dimension
of a linear subspace of $C^\infty_c (\mathring{\Sigma})$ where $Q_{\Sigma}$ is negative definite. Notice here we impose the Dirichlet boundary conditions along $\partial \Sigma$. We further define the (Neumann) nullity $\nul(\Sigma)$ and the Dirichlet nullity $\nul_\bullet(\Sigma)$ as the dimension of bounded solutions of $Lf = 0$ on $\mathring{\Sigma}$ with Neumann and Dirichlet boundary conditions, respectively.

By \cite[Lemma 18 and Lemma 20]{ambrozio2019bubbling}, we can relate the indices of a half-bubble and the corresponding symmetric bubble as
$$\Index(\Sigma) = \Index_+(\check{\Sigma}), \quad \Index_\bullet(\Sigma) = \Index_-(\check{\Sigma}),$$
and it is straightforward to see that 
$$\nul(\Sigma) = \nul_+(\check{\Sigma}), \quad \nul_\bullet(\Sigma) = \nul_-(\check{\Sigma}).$$
These implies
$$\Index(\check{\Sigma}) = \Index(\Sigma) +\Index_\bullet(\Sigma), \quad \nul(\check{\Sigma}) = \nul(\Sigma) +\nul_\bullet(\Sigma).$$ 
Then Lemma \ref{lem: bubble nullity} implies that
\begin{lem}\label{lem: half-bubble nullity}
For a half-bubble $\Sigma$, we have $\nul(\Sigma) \ge 2$ and  $\nul_\bullet(\Sigma) \ge 1$.

If we further assume that $\Sigma$ is neither a half-plane nor a half-catenoid, then in case (a), $\nul(\Sigma) \ge 3$ and $\nul_\bullet(\Sigma) \ge 1$; in case (b),  
$\nul(\Sigma) \ge 2$ and $\nul_\bullet(\Sigma) \ge 2$.
\end{lem}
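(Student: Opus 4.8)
The plan is to reduce the statement entirely to Lemma~\ref{lem: bubble nullity} through the reflection correspondence. Recall from the discussion immediately preceding the lemma that doubling a half-bubble $\Sigma \subset \Pi_-$ across $\Pi$ yields a bubble $\check\Sigma$ invariant under the reflection $\sigma$ across $\Pi$, and that the even/odd decomposition of functions on $\check\Sigma$ matches Neumann/Dirichlet data on $\Sigma$; this gave the identities $\nul(\Sigma) = \nul_+(\check\Sigma)$ and $\nul_\bullet(\Sigma) = \nul_-(\check\Sigma)$. Thus every lower bound on $\nul_+(\check\Sigma)$ or $\nul_-(\check\Sigma)$ coming from Lemma~\ref{lem: bubble nullity} transfers verbatim into a lower bound on $\nul(\Sigma)$ or $\nul_\bullet(\Sigma)$.

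First I would dispose of the unconditional claim: applying the first part of Lemma~\ref{lem: bubble nullity} to $X = \check\Sigma$ gives $\nul_+(\check\Sigma) \ge 2$ and $\nul_-(\check\Sigma) \ge 1$, i.e. $\nul(\Sigma) \ge 2$ and $\nul_\bullet(\Sigma) \ge 1$.

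For the refined estimates I would next check that the hypothesis ``$\Sigma$ is neither a half-plane nor a half-catenoid'' forces ``$\check\Sigma$ is neither a plane nor a catenoid''. This is the only point that needs a small argument, but it is elementary: if $\check\Sigma$ were a plane, then invariance under $\sigma$ together with the free boundary condition would force it to meet $\Pi$ orthogonally, so $\Sigma = \check\Sigma \cap \Pi_-$ would be a half-plane; and if $\check\Sigma$ were a catenoid, $\Pi$ would have to be one of its planes of symmetry, so $\Sigma$ would be a half-catenoid. Either conclusion contradicts the hypothesis, so $\check\Sigma$ satisfies the hypotheses of the second part of Lemma~\ref{lem: bubble nullity}. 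Since by definition $\Sigma$ lies in case (a) (resp. case (b)) exactly when $\check\Sigma$ does, I would then simply read off: in case (a), Lemma~\ref{lem: bubble nullity} gives $\nul_+(\check\Sigma) \ge 3$ and $\nul_-(\check\Sigma) \ge 1$, hence $\nul(\Sigma) \ge 3$ and $\nul_\bullet(\Sigma) \ge 1$; in case (b), it gives $\nul_+(\check\Sigma) \ge 2$ and $\nul_-(\check\Sigma) \ge 2$, hence $\nul(\Sigma) \ge 2$ and $\nul_\bullet(\Sigma) \ge 2$.

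I do not expect any genuine obstacle: all the analytic content sits in Lemma~\ref{lem: bubble nullity} and the even/odd identities already recorded, so the only care required is the routine verification that the symmetric bubbles which double-cover half-planes and half-catenoids are exactly the planes and catenoids.
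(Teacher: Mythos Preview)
Your proposal is correct and matches the paper's approach exactly: the paper simply states that Lemma~\ref{lem: bubble nullity} implies Lemma~\ref{lem: half-bubble nullity} via the identities $\nul(\Sigma)=\nul_+(\check\Sigma)$ and $\nul_\bullet(\Sigma)=\nul_-(\check\Sigma)$, without further comment. Your extra care in verifying that the half-plane/half-catenoid exclusion on $\Sigma$ corresponds to the plane/catenoid exclusion on $\check\Sigma$ is a detail the paper leaves implicit.
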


\subsection{Bounds on the index} 
Given a half-bubble $\Sigma$ and the corresponding symmetric bubble $\check{\Sigma}$,
work of Osserman \cite{osserman1986survey} shows that $\check{\Sigma}$ is conformally equivalent to a compact Riemann surface $\overline{M}$ with finitely many punctures $p_1,\dots, p_k$, and the Gauss map $N$ extends across the punctures to a holomorphic function $\Phi\colon \overline{M} \to S^2$. The reflection symmetry of $\check{\Sigma}$ across $\Pi$ induces an anticonformal involution $\sigma$ of $\overline{M}$. The half-bubble ${\Sigma}$ is then conformal to a connected subset of $\overline{\Omega}\setminus\{p_1,\dots, p_k\}$ , whose closure in $\overline{M}$ is denoted by $\overline{\Omega}$. We denote the interior of $\overline{\Omega}$ by $\Omega$.

Endow $\overline{M}$ with any conformal Riemannian metric such that $\sigma$ is a Riemannian involution. Then we can consider the operator $L_\Phi : = -\Delta - |\nabla \Phi|^2$ and the corresponding quadratic form $Q(f,f) = \int (|\nabla f|^2 - |\nabla \Phi|^2f^2) dA$
on $\overline{M}$ and $\overline{\Omega}$. Notice that $Q$ is independent of
the particular choice of metric. We can then similarly define the various indices and nullities $\Index_\Phi(\overline{M})$,  $\nul_\Phi(\overline{M})$, $(\Index_\Phi)_-(\overline{M})$, $(\Index_\Phi)_+(\overline{M})$,  $(\nul_\Phi)_-(\overline{M})$, $(\nul_\Phi)_+(\overline{M})$, $\Index_\Phi(\overline{\Omega})$, $(\Index_\Phi)_\bullet(\overline{\Omega})$, $\nul_\Phi(\overline{\Omega})$, $(\nul_\Phi)_\bullet(\overline{\Omega})$, which are independent of the metric chosen.

By the usual variational method we see that $\Index_\Phi(\overline{M})$ equals the number of negative eigenvalues of $L$ on $\overline{M}$, and $\Index_\Phi(\overline{\Omega})$ (resp. $(\Index_\Phi)_\bullet(\overline{\Omega})$) equals the number of negative eigenvalues of $L$ on $\overline{\Omega}$ with Neumann boundary conditions (resp. Dirichlet boundary conditions).
Fischer-Colbrie \cite{fischer1985complete} showed the index and nullity of the original surface $\check{\Sigma}$ equal those of the compact Riemann surface $\overline{M}$:
$$\Index (\check{\Sigma}) = \Index_\Phi(\overline{M}), \quad \nul(\check{\Sigma}) = \nul_\Phi(\overline{M}).$$ 
Similarly we can show that 
$$(\Index_\Phi)_-(\overline{M}) = \Index_-(\check{\Sigma}), \quad (\Index_\Phi)_+(\overline{M}) = \Index_+(\check{\Sigma}),$$ 
$$(\nul_\Phi)_-(\overline{M}) = \nul_-(\check{\Sigma}), \quad (\nul_\Phi)_+(\overline{M}) = \nul_+(\check{\Sigma}),$$
$$\Index_\Phi(\overline{\Omega}) = \Index_\Phi(\Sigma), \quad (\Index_\Phi)_\bullet(\overline{\Omega}) = (\Index_\Phi)_\bullet(\Sigma),$$
$$\nul_\Phi(\overline{\Omega}) = \nul_\Phi(\Sigma), \quad (\nul_\Phi)_\bullet(\overline{\Omega}) = (\nul_\Phi)_\bullet(\Sigma).$$

Comparing Neumann and Dirichlet eigenvalues of the Jacobi operator on $\overline{\Omega}$ gives the simple bound $\Index(\overline{\Omega}) \ge  \Index_\bullet(\overline{\Omega})$. However, applying \cite[Theorem 3.3]{tran2020index} to $\overline{\Omega}$ (with $\alpha =0$, $\phi = 1$, $m = |\nabla \Phi|^2$), this inequality can be refined to

\begin{thm}[\cite{tran2020index}]\label{thm: Tran}
In the setting above,
$$\Index(\overline{\Omega}) \ge  \Index_\bullet(\overline{\Omega}) + \nul_\bullet(\overline{\Omega}).$$
\end{thm}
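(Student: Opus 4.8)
The plan is to prove this directly (it is the specialization of \cite[Theorem 3.3]{tran2020index} to the present setting) by exhibiting a subspace of $H^1(\Omega)$ of dimension $\Index_\bullet(\overline{\Omega})+\nul_\bullet(\overline{\Omega})$ on which $Q$ is negative definite; since $\Index(\overline{\Omega})$ is, by the variational characterization, the largest dimension of such a subspace with no boundary condition imposed, this gives the inequality. Write $p:=\Index_\bullet(\overline{\Omega})$ and $d:=\nul_\bullet(\overline{\Omega})$. I would start from an $L^2(\Omega)$-orthonormal family $\phi_1,\dots,\phi_p\in C^\infty(\overline{\Omega})\cap H^1_0(\Omega)$ of Dirichlet eigenfunctions of $L_\Phi$ with eigenvalues $\mu_1,\dots,\mu_p<0$, together with a basis $\phi_{p+1},\dots,\phi_{p+d}\in C^\infty(\overline{\Omega})\cap H^1_0(\Omega)$ of the Dirichlet kernel $\{f: L_\Phi f=0,\ f|_{\partial\Omega}=0\}$. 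The span of $\phi_1,\dots,\phi_p$ already realizes the simple bound $\Index(\overline{\Omega})\ge\Index_\bullet(\overline{\Omega})$, and the task is to enlarge it by $d$ dimensions using the Dirichlet Jacobi fields $\phi_{p+j}$. On their own these are useless, because integration by parts gives $Q(\phi_{p+j},g)=\int_{\partial\Omega}g\,\partial_\nu\phi_{p+j}\,ds$ for every $g\in C^\infty(\overline{\Omega})$ (using $L_\Phi\phi_{p+j}=0$), so $Q(\phi_{p+j},\cdot)$ vanishes on all of $H^1_0(\Omega)$; the idea is therefore to perturb each $\phi_{p+j}$ by a small multiple of a function that does not vanish on $\partial\Omega$.

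Concretely, I would fix smooth functions $\psi_1,\dots,\psi_d\in C^\infty(\overline{\Omega})$ and, for small $\epsilon>0$, set $\tilde\phi_j:=\phi_{p+j}+\epsilon\psi_j$ and $U_\epsilon:=\Span\{\phi_1,\dots,\phi_p,\tilde\phi_1,\dots,\tilde\phi_d\}\subset H^1(\Omega)$. The relations $Q(\phi_i,\phi_j)=\mu_i\delta_{ij}$ for $i,j\le p$, $Q(\phi_i,\phi_{p+j})=0$, $Q(\phi_{p+j},\phi_{p+k})=0$, together with the boundary formula above, show that the Gram matrix of $Q|_{U_\epsilon}$ in this basis is $\left(\begin{smallmatrix}A & \epsilon B\\ \epsilon B^{\top} & -\epsilon(M+M^{\top})+\epsilon^2 C\end{smallmatrix}\right)$, where $A=\operatorname{diag}(\mu_1,\dots,\mu_p)\prec 0$ is independent of $\epsilon$, $M_{jk}:=-\int_{\partial\Omega}\psi_k\,\partial_\nu\phi_{p+j}\,ds$, and $B,C$ are fixed matrices. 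If the $\psi_k$ can be chosen so that $M=I$, then the bottom-right block is $-2\epsilon I+\epsilon^2 C$ and the Schur complement of $A$ is $-2\epsilon I+\epsilon^2(C-B^{\top}A^{-1}B)$; both $A$ and this Schur complement are negative definite for all sufficiently small $\epsilon>0$, hence so is the full Gram matrix. Then $U_\epsilon$ has dimension $p+d$ and $Q$ is negative definite on it, so $\Index(\overline{\Omega})\ge p+d=\Index_\bullet(\overline{\Omega})+\nul_\bullet(\overline{\Omega})$, finishing the proof.

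The remaining — and, I expect, the only genuinely delicate — point is to choose $\psi_1,\dots,\psi_d$ with $\int_{\partial\Omega}\psi_k\,\partial_\nu\phi_{p+j}\,ds=-\delta_{jk}$. Such $\psi_k$ exist precisely when the normal derivatives $\partial_\nu\phi_{p+1}|_{\partial\Omega},\dots,\partial_\nu\phi_{p+d}|_{\partial\Omega}$ are linearly independent in $L^2(\partial\Omega)$: one then picks boundary functions dual to them and extends them smoothly into $\overline{\Omega}$. To get this independence, suppose $\sum_j c_j\,\partial_\nu\phi_{p+j}$ vanished on $\partial\Omega$; then $\phi:=\sum_j c_j\phi_{p+j}$ would solve $L_\Phi\phi=0$ on $\Omega$ with vanishing Cauchy data $\phi=\partial_\nu\phi=0$ along $\partial\Omega$, and unique continuation for the second-order elliptic operator $L_\Phi$ (Aronszajn's theorem — applicable since $\overline{\Omega}$ is a smooth compact surface with boundary, $\Omega$ is connected, and the potential $|\nabla\Phi|^2$ is smooth) forces $\phi\equiv 0$, hence all $c_j=0$. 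Before invoking this I would also need to record the standard facts that $\overline{\Omega}$ really is a smooth compact surface with boundary — in case (b) the punctures lying on $\partial\Omega$ are removable after Osserman's conformal compactification — and that it is connected with nonempty boundary. This unique continuation mechanism, which is exactly what promotes the naive bound $\Index(\overline{\Omega})\ge\Index_\bullet(\overline{\Omega})$ to the stated improvement, is the part I expect to require the most care; everything else is the linear algebra of the previous paragraph.
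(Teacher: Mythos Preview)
Your argument is correct. Note that the paper itself does not give a proof of this statement --- it simply cites \cite[Theorem~3.3]{tran2020index} (with $\alpha=0$, $\phi=1$, $m=|\nabla\Phi|^2$) and records that Tran's full result is in fact an equality involving a Dirichlet-to-Neumann term. What you have written is essentially the relevant part of Tran's proof, specialized to the present setting: the perturbation $\phi_{p+j}\mapsto\phi_{p+j}+\epsilon\psi_j$, the Schur-complement computation, and the unique continuation step showing that the boundary normal derivatives of the Dirichlet Jacobi fields are linearly independent are exactly the ingredients there. One small remark on that last step: what you need is unique continuation from vanishing Cauchy data on $\partial\Omega$, which is not quite the interior statement usually attributed to Aronszajn; the cleanest justification here is to extend $\phi$ by zero to the other half $\sigma(\overline{\Omega})$ of the closed surface $\overline{M}$ --- the vanishing of both $\phi$ and $\partial_\nu\phi$ on $\partial\Omega$ makes this extension a weak solution of $L_\Phi=0$ across $\partial\Omega$ --- and then apply the standard interior unique continuation theorem on $\overline{M}$.
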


The full theorem of Tran actually gives an equality with a term involving eigenvalues of the Dirichlet-to-Neumann map added to the right hand side. However, this inequality form is all we need here, and from it we deduce
\begin{prop}\label{prop: main bound}
For a half-bubble $\Sigma$, $\Index(\Sigma) \ge  \Index_\bullet(\Sigma) + \nul_\bullet(\Sigma)$. 

For a symmetric bubble $X$, $\Index_+(X) \ge  \Index_-(X) + \nul_-(X)$. 
\end{prop}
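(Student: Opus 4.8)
The plan is to obtain both inequalities as formal consequences of Theorem~\ref{thm: Tran} together with the dictionary, assembled above, between the indices and nullities of $\Sigma$, $\check{\Sigma}$, $\overline{M}$ and $\overline{\Omega}$; no further analytic input is needed. The first step is to check that $\overline{\Omega}$ is a legitimate surface to feed into Theorem~\ref{thm: Tran}. Since $\sigma$ is an anticonformal involution of the closed Riemann surface $\overline{M}$, its fixed-point set is a finite disjoint union of real-analytic circles, so $\overline{\Omega}$ is a compact surface whose boundary $\partial\overline{\Omega}$ is smooth; moreover the potential $m=|\nabla\Phi|^2$ is smooth on all of $\overline{\Omega}$ because $\Phi$ is holomorphic, hence smooth, across the filled-in punctures. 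This includes case (b), where the punctures $p_i$ lie on $\partial\overline{\Omega}$ but still create no singularity of $m$ or of $\partial\overline{\Omega}$. With these hypotheses in hand, Theorem~\ref{thm: Tran} gives
$$\Index_\Phi(\overline{\Omega}) \;\ge\; (\Index_\Phi)_\bullet(\overline{\Omega}) + (\nul_\Phi)_\bullet(\overline{\Omega}).$$

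Next I would transport this inequality to $\Sigma$. By conformal invariance of $Q$ in dimension two and the Fischer--Colbrie removable-singularity argument recalled above, $\Index(\Sigma)=\Index_\Phi(\Sigma)=\Index_\Phi(\overline{\Omega})$, $\Index_\bullet(\Sigma)=(\Index_\Phi)_\bullet(\Sigma)=(\Index_\Phi)_\bullet(\overline{\Omega})$ and $\nul_\bullet(\Sigma)=(\nul_\Phi)_\bullet(\Sigma)=(\nul_\Phi)_\bullet(\overline{\Omega})$. Substituting these three identities into the displayed inequality yields $\Index(\Sigma)\ge\Index_\bullet(\Sigma)+\nul_\bullet(\Sigma)$, which is the first assertion.

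For the second assertion I would simply re-read the first through the even/odd dictionary. Given a symmetric bubble $X$, take $\Pi$ to be its plane of symmetry and set $\Sigma=X\cap\Pi_-$, so that $X=\check{\Sigma}$; since $\Index$ and $\nul$ are additive over connected components, we may if necessary replace $X\cap\Pi_-$ by one of its connected components, each of which is a half-bubble. By the identities $\Index(\Sigma)=\Index_+(\check{\Sigma})$, $\Index_\bullet(\Sigma)=\Index_-(\check{\Sigma})$ and $\nul_\bullet(\Sigma)=\nul_-(\check{\Sigma})$ recorded from \cite[Lemma 18 and Lemma 20]{ambrozio2019bubbling}, the first assertion applied to $\Sigma$ reads $\Index_+(X)\ge\Index_-(X)+\nul_-(X)$. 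Equivalently, one may apply Theorem~\ref{thm: Tran} to $\overline{\Omega}$ directly and use the identifications $(\Index_\Phi)_+(\overline{M})=\Index_\Phi(\overline{\Omega})$, $(\Index_\Phi)_-(\overline{M})=(\Index_\Phi)_\bullet(\overline{\Omega})$ and $(\nul_\Phi)_-(\overline{M})=(\nul_\Phi)_\bullet(\overline{\Omega})$.

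I do not anticipate a genuine obstacle: all the analytic depth is packaged in Theorem~\ref{thm: Tran}, and what remains is bookkeeping with the dictionary. The only points deserving a line of care are the verification of Tran's regularity hypotheses on $\overline{\Omega}$ — smoothness of $\partial\overline{\Omega}$ and of $m$, in particular in case (b) — and, in the second assertion, the passage to connected components when $X\cap\Pi_-$ fails to be connected.
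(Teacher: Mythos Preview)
Your proposal is correct and is essentially the paper's own argument: the paper simply states Theorem~\ref{thm: Tran} for $\overline{\Omega}$ and then writes ``from it we deduce'' Proposition~\ref{prop: main bound}, relying on the same dictionary of identifications you invoke. Your additional remarks on the regularity of $\partial\overline{\Omega}$ and of $m=|\nabla\Phi|^2$, and on connectedness of $X\cap\Pi_-$, are points the paper leaves implicit but are reasonable to spell out.
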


By the simplicity of the first Dirichlet eigenvalue of the Jacobi operator on $\overline{\Omega}$, if $\nul_\bullet(\overline{\Omega}) \ge 2$, then we are forced to have $\Index_\bullet(\overline{\Omega}) \ge 1$. Using this observation and the discussions above we have

\begin{prop}\label{prop: nul}
For a half-bubble $\Sigma$, we have $\nul(\Sigma) \ge 2$ and $\nul_\bullet(\Sigma) \ge 1$. If $\nul_\bullet(\Sigma) \ge 2$, then $\Index_\bullet(\Sigma) \ge 1$.

For a symmetric bubble $X$, we have $\nul_+(X) \ge 2$ and $\nul_-(X) \ge 1$. If $\nul_-(X) \ge 2$, then $\Index_-(X) \ge 1$.
\end{prop}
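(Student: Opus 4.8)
The plan is to peel off the parts of Proposition~\ref{prop: nul} that are already established and reduce everything else to one clean spectral fact. The first inequality in each paragraph --- $\nul(\Sigma)\ge 2$ and $\nul_\bullet(\Sigma)\ge 1$ for a half-bubble, and $\nul_+(X)\ge 2$, $\nul_-(X)\ge 1$ for a symmetric bubble --- is exactly the (unconditional) first assertion of Lemma~\ref{lem: half-bubble nullity} (respectively Lemma~\ref{lem: bubble nullity}), so I would just quote those. It then remains to prove the two conditional clauses, and I would treat them together. Using the metric-independent identifications assembled above --- $\nul_\bullet(\Sigma)=\nul_-(\check\Sigma)=(\nul_\Phi)_-(\overline M)$ and $\Index_\bullet(\Sigma)=\Index_-(\check\Sigma)=(\Index_\Phi)_-(\overline M)$, together with the fact that an odd function on $\overline M$ is nothing but a function on $\overline\Omega$ vanishing on $\partial\Omega=\mathrm{Fix}(\sigma)$, which gives $(\nul_\Phi)_-(\overline M)=(\nul_\Phi)_\bullet(\overline\Omega)$ and $(\Index_\Phi)_-(\overline M)=(\Index_\Phi)_\bullet(\overline\Omega)$ --- both clauses follow from the single assertion: \emph{if $(\nul_\Phi)_\bullet(\overline\Omega)\ge 2$ then $(\Index_\Phi)_\bullet(\overline\Omega)\ge 1$.}

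To prove this I would fix a conformal metric on $\overline M$ for which $\sigma$ is an isometry and consider the self-adjoint Schr\"odinger operator $L_\Phi=-\Delta-|\nabla\Phi|^2$ on the compact surface-with-boundary $\overline\Omega$ with Dirichlet boundary conditions. Its spectrum is discrete, $\lambda_1\le\lambda_2\le\cdots\to\infty$, with $(\Index_\Phi)_\bullet(\overline\Omega)$ the number of negative eigenvalues and $(\nul_\Phi)_\bullet(\overline\Omega)$ the multiplicity of the eigenvalue $0$. The point is that $\lambda_1$ is simple: a ground state $f$ minimizes the Rayleigh quotient, hence so does $|f|$, so $f$ may be chosen $\ge 0$; elliptic regularity and the strong maximum principle then force $f>0$ in the interior (here I use that $\overline\Omega$ is connected, which holds because the half-bubble $\Sigma$ is connected); and two $L^2(\overline\Omega)$-orthogonal functions cannot both be everywhere positive, so $\dim\ker(L_\Phi-\lambda_1)=1$. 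Granting this, suppose $(\nul_\Phi)_\bullet(\overline\Omega)\ge 2$, i.e. $0$ is a Dirichlet eigenvalue of multiplicity at least two; then $0\ne\lambda_1$, and since $0$ lies in the spectrum and $\lambda_1=\min\mathrm{spec}$ we must have $\lambda_1<0$, so $(\Index_\Phi)_\bullet(\overline\Omega)\ge 1$. Reading this back through the identifications above yields $\Index_\bullet(\Sigma)\ge 1$ whenever $\nul_\bullet(\Sigma)\ge 2$, and $\Index_-(X)\ge 1$ whenever $\nul_-(X)\ge 2$.

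The only step that is more than bookkeeping is the simplicity of the first Dirichlet eigenvalue, so that is where I would take care; but it is the classical Perron--Frobenius argument for a Schr\"odinger operator on a connected compact domain, and the needed connectedness of $\overline\Omega$ is immediate from connectedness of $\Sigma$, so I anticipate no genuine obstacle.
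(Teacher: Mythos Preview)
Your proposal is correct and follows exactly the paper's approach: the nullity lower bounds are quoted from Lemma~\ref{lem: bubble nullity}/Lemma~\ref{lem: half-bubble nullity}, and the conditional clause is reduced via the stated identifications to the simplicity of the first Dirichlet eigenvalue of $L_\Phi$ on $\overline{\Omega}$, which forces a negative eigenvalue whenever the zero eigenspace has dimension at least two. The paper simply asserts this simplicity in one line, whereas you spell out the standard Perron--Frobenius argument; otherwise the proofs are identical.
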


We can now prove Theorem~\ref{thm: index comparison}:

\begin{proof}[Proof of Theorem~\ref{thm: index comparison}]
Combining the identity $$\Index(\check{\Sigma}) = \Index(\Sigma) +\Index_\bullet(\Sigma)$$
with the bound  $$\Index(\Sigma) \ge  \Index_\bullet(\Sigma) + \nul_\bullet(\Sigma)$$ in Proposition~\ref{prop: main bound}
yields
$$\Index(\Sigma) + \Index_\bullet(\Sigma)  \le \Index(\check{\Sigma}) \le 2\Index(\Sigma) -\nul_\bullet(\Sigma).$$
By Lemma~\ref{lem: half-bubble nullity}, we always have $\nul_\bullet(\Sigma) \ge 1$. If all the ends of $\check{\Sigma}$ are orthogonal to the bounding plane, then by Lemma~\ref{lem: half-bubble nullity} and Proposition~\ref{prop: nul}, we further get $\nul_\bullet(\Sigma) \ge 2$ and $\Index_\bullet(\Sigma) \ge 1$.
These bounds yield the desired result.
\end{proof}

Let $\{\Sigma_t\}_{t\ge 1}$ be the 1-parameter family of embedded genus one minimal surfaces with three ends, which is the Hoffman--Meeks deformation family of the Costa surface.
We can also provide a simplified proof of a result of \cite{chodosh2018topology}.
\begin{prop}[\cite{chodosh2018topology}]\label{cor: Costa}
$\Index(\Sigma_t) \ge 4$ for all $t$.
\end{prop}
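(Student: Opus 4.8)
The plan is to exploit the symmetry of the Costa surface $\Sigma_t$ and apply Proposition~\ref{prop: main bound} together with the nullity bounds. Recall that each Costa surface $\Sigma_t$ has a rich symmetry group; in particular it is invariant under reflection across a vertical plane $\Pi$. With respect to this reflection we decompose $\Index(\Sigma_t) = \Index_-(\Sigma_t) + \Index_+(\Sigma_t)$ and similarly for the nullity. The key structural input is the known topology of the space of bounded Jacobi fields on $\Sigma_t$: since $\Sigma_t$ has genus one and three ends, and is neither a plane nor a catenoid, we have the four ``standard'' Jacobi fields $\langle e_i, N\rangle$ for $i=1,2,3$ together with the rotational field $\phi = \det(p,N,\lambda)$, where $\lambda$ is the axis of the ends (the middle, planar end is horizontal and the two catenoidal ends are vertical, so $\lambda = e_3$ after normalizing so $\Pi$ is the $yz$-plane). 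So $\nul(\Sigma_t) \ge 4$.

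Next I would sort these four Jacobi fields into even and odd pieces with respect to $\sigma$, exactly as in the proof of Lemma~\ref{lem: bubble nullity}. Taking $\Pi = \{x = 0\}$, the field $\langle e_1, N\rangle$ is odd, while $\langle e_2, N\rangle$ and $\langle e_3, N\rangle$ are even. For the rotational field, one must identify where the ends of $\Sigma_t$ point. The Costa family is of ``case (b)'' type relative to a suitable vertical reflection plane — the two catenoidal ends are parallel and vertical — so $\lambda = e_3$ is parallel to $\Pi$, and the computation in Lemma~\ref{lem: bubble nullity}(b) shows $\phi$ is odd. Hence $\nul_-(\Sigma_t) \ge 2$ (from $\langle e_1, N\rangle$ and $\phi$) and $\nul_+(\Sigma_t) \ge 2$ (from $\langle e_2, N\rangle$ and $\langle e_3, N\rangle$); we are in case (b) of Lemma~\ref{lem: bubble nullity} applied to the symmetric bubble $X = \Sigma_t$.

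Now I would run the two halves of Proposition~\ref{prop: main bound} (in its ``symmetric bubble'' form). First, $\nul_-(\Sigma_t) \ge 2$ together with Proposition~\ref{prop: nul} forces $\Index_-(\Sigma_t) \ge 1$. Then the refined inequality $\Index_+(X) \ge \Index_-(X) + \nul_-(X)$ from Proposition~\ref{prop: main bound} gives
$$\Index_+(\Sigma_t) \ge \Index_-(\Sigma_t) + \nul_-(\Sigma_t) \ge 1 + 2 = 3.$$
Combining,
$$\Index(\Sigma_t) = \Index_-(\Sigma_t) + \Index_+(\Sigma_t) \ge 1 + 3 = 4,$$
which is the desired bound, uniformly in $t$.

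The main obstacle — and the step deserving the most care — is the claim that, with respect to the appropriate reflection plane $\Pi$, the Costa family lies in case (b) and the rotational field $\phi$ is genuinely nonzero and $\sigma$-odd. This requires recalling the precise symmetry group and end-structure of the Hoffman–Meeks surfaces $\Sigma_t$: one must verify that there is a vertical symmetry plane containing the common axis of the two catenoidal ends, so that $\lambda$ is parallel to $\Pi$, and that $\phi \notin L(\Sigma_t)$ so it contributes a genuinely new dimension to $\nul_-$. One also needs that $\phi$ is bounded, which follows from \cite{montiel1991schrodinger} since the ends are embedded and parallel. The remaining ingredients — the decomposition of index and nullity into $\pm$ parts, simplicity of the first Dirichlet eigenvalue, and the Tran inequality — are all already in place from the earlier sections, so once the symmetry bookkeeping is pinned down the argument closes quickly. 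A minor point to double-check is that the boundedness and evenness/oddness assignments for the translational fields $\langle e_i,N\rangle$ are insensitive to $t$, which is clear since the reflection symmetry persists along the whole family.
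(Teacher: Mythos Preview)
Your proposal is correct and follows essentially the same route as the paper's own proof: place $\Sigma_t$ in case (b) with respect to a vertical reflection plane, use Lemma~\ref{lem: bubble nullity} to get $\nul_-(\Sigma_t)\ge 2$, invoke Proposition~\ref{prop: nul} for $\Index_-(\Sigma_t)\ge 1$, and then apply Proposition~\ref{prop: main bound} to obtain $\Index_+(\Sigma_t)\ge 3$ and hence $\Index(\Sigma_t)\ge 4$. The only wrinkle is your phrasing ``the two catenoidal ends are vertical''---all three ends have horizontal limiting planes and common axis $e_3$---but your conclusion that $\lambda=e_3$ is parallel to $\Pi$ (so case (b) applies) is correct.
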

\begin{proof}
After a Euclidean motion we can assume that $\Sigma_t$ has a reflection symmetry across the $yz$-plane. Notice that all of the ends of $\Sigma_t$ intersect the $yz$-plane, so $\Sigma_t$ satisfies case (b) here. Then Lemma \ref{lem: bubble nullity} implies that $\nul_+(\Sigma_t) \ge 2$, and $\nul_-(\Sigma_t) \ge 2$.
In this case, Proposition \ref{prop: nul} further gives $\Index_-(\Sigma_t) \ge 1$, and consequently we have
$$\Index_+(\Sigma_t) \ge \Index_-(\Sigma_t) + \nul_-(\Sigma_t) \ge 1 + 2 = 3.$$ 

Thus $\Index(\Sigma_t) = \Index_+(\Sigma_t) + \Index_-(\Sigma_t) \ge 4$.

\end{proof}

\section{Free boundary minimal surfaces in a quarter-space}
We can also consider a complete,
connected, properly embedded minimal surface $\Lambda$ that is contained in a quarter-space of $\R^3$
and has non-empty free boundary with respect to the boundary of this quarter-space,
and has finite total curvature. Following previous sections, we call it a \emph{quarter-bubble}. Denote the two bounding planes by $\Pi_1, \Pi_2$ and denote the reflections of $\R^3$ across $\Pi_1$, $\Pi_2$ by $\sigma_1$, $\sigma_2$, respectively. By reflection across $\Pi_1$, we get a free boundary minimal surface $\Sigma$ in a half-space of $\R^3$, which is smooth by standard elliptic theory. Thus $\Sigma$ is a half-bubble invariant under $\sigma_1$.
If we further reflect across $\Pi_2$, we get a bubble $X$ in $\R^3$, which is invariant under $\sigma_1$ and $\sigma_2$.

We can similarly consider the Jacobi operator $L = - \Delta - |\nabla N|^2$ (where $N\colon \Lambda \to S^2$ is again the Gauss map) and the associated quadratic form
$Q_{\Lambda}(f,f)$ on $\Lambda$ and define the notion of Morse index of $\Lambda$ with different boundary conditions as follows:
We define $\Index_{NN}(\Lambda)$ (resp. $\Index_{DN}(\Lambda)$, $\Index_{ND}(\Lambda)$, $\Index_{DD}(\Lambda)$) as the largest dimension
of a linear subspace of $C^\infty_c (\Lambda)$ (resp. $C^\infty_c (\Lambda \setminus \Pi_1)$, $C^\infty_c (\Lambda \setminus \Pi_2)$, $C^\infty_c (\mathring{\Lambda})$) where $Q_{\Lambda}$ is negative definite. 
The Morse index $\Index(\Lambda)$ of $\Lambda$ as a free boundary minimal surface equals $\Index_{NN}(\Lambda)$. 
We further define $\nul_{**}(\Sigma)$ where $* = N,D$ as the dimension of bounded solutions of $Lf = 0$ on $\mathring{\Lambda}$ with corresponding boundary conditions (Neumann or Dirichlet) on $\partial \Lambda \cap \Pi_1$, $\partial \Lambda \cap \Pi_2$. 
From definition it is clear that we have the chain of inequalites 
$$\Index_{NN}(\Lambda) \ge \Index_{DN}(\Lambda) \ge \Index_{DD}(\Lambda)$$ and $$\Index_{NN}(\Lambda) \ge \Index_{ND}(\Lambda) \ge \Index_{DD}(\Lambda).$$

For the corresponding bubble $X$ symmetric under $\sigma_1, \sigma_2$, we can decompose $C^\infty (X)$ and $C^\infty_c (X)$ into eigenspaces $(C^\infty)_{\pm \pm} (X)$, $(C^\infty_c)_{\pm \pm} (X)$.
We then define
$\Index_{\pm \pm} (X)$ as the largest dimension
of a linear subspace of $(C^\infty_c)_{\pm \pm} (X)$ where $Q_{X}$ is negative definite, and define $\nul_{\pm \pm}(X)$ as the dimension of the space of functions $f \in (C^\infty)_{\pm \pm} (X) \cap L^\infty(X)$ such that $Lf = 0$.
Arguing similarly as in \cite[Lemma 16, Lemma 18 and Lemma 20]{ambrozio2019bubbling},
we get
$$\Index(X) = \Index_{++} (X) + \Index_{+-} (X)+ \Index_{-+} (X)+\Index_{--} (X),$$
$$\Index_{NN}(\Lambda) = \Index_{++} (X),\ \Index_{ND}(\Lambda) = \Index_{+-} (X),$$
$$\Index_{DN}(\Lambda) = \Index_{-+} (X),\ \Index_{DD}(\Lambda) = \Index_{--} (X),$$
$$\Index_{NN}(\Lambda) + \Index_{DN}(\Lambda) = \Index(\Sigma),$$
$$\Index_{ND}(\Lambda) + \Index_{DD}(\Lambda) = \Index_\bullet(\Sigma),$$
and a similar decomposition for the nullities.

We can then characterize quarter-bubbles of low indices:
\begin{prop}
Let $\Lambda$ be a quarter-bubble. Then
\begin{enumerate}[(a)]
\item If $\Lambda$ has index 0, then it is a quarter of a plane. 
\item If $\Lambda$ has index 1, then it is isometric to a quarter of a catenoid.
\end{enumerate}
\end{prop}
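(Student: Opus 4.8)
The plan is to deduce both parts from the classification of low-index half-bubbles in \cite{ambrozio2019bubbling} — the half-plane is the only half-bubble of index $0$, and the half-catenoid the only one of index $1$ — applied to the reflected half-bubble $\Sigma = \Lambda \cup \sigma_1(\Lambda)$. Write $\Pi_1^{-}, \Pi_2^{-}$ for the two half-spaces whose intersection is the quarter-space containing $\Lambda$. Within the framework of this section $\partial\Lambda$ meets both $\Pi_1$ and $\Pi_2$ (otherwise the double reflection would not yield a connected bubble $X$), and from the index identities above $\Index(\Sigma) = \Index_{NN}(\Lambda) + \Index_{DN}(\Lambda)$, together with $\Index_{NN}(\Lambda) \ge \Index_{DN}(\Lambda) \ge 0$.

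For part (a): if $\Index(\Lambda) = \Index_{NN}(\Lambda) = 0$, the chain of inequalities forces $\Index_{DN}(\Lambda) = 0$, so $\Index(\Sigma) = 0$ and $\Sigma$ is a half-plane, $\Sigma = P \cap \Pi_2^{-}$ with $P$ orthogonal to $\Pi_2$. Since $\Sigma$ is $\sigma_1$-invariant and is not contained in $\Pi_1$ (a planar piece of $\Pi_1$ cannot meet $\Pi_1$ orthogonally), $P$ is also orthogonal to $\Pi_1$, and then $\Lambda = \Sigma \cap \Pi_1^{-} = P \cap \Pi_1^{-} \cap \Pi_2^{-}$ is a quarter of the plane $P$.

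For part (b): if $\Index(\Lambda) = \Index_{NN}(\Lambda) = 1$, then $\Index_{DN}(\Lambda) \in \{0,1\}$, so $\Index(\Sigma) \in \{1,2\}$; since there is no half-bubble of index $2$ by Theorem~\ref{thm: no index two}, we get $\Index(\Sigma) = 1$, hence $\Sigma$ is a half-catenoid. Then $X = \Sigma \cup \sigma_2(\Sigma)$ is a catenoid invariant under the commuting reflections $\sigma_1$ and $\sigma_2$, so $\Pi_1$ and $\Pi_2$ are planes of symmetry of $X$, each either a plane through the axis or the waist plane. These two planes cut $X$ into four congruent pieces, one of which is $\Lambda = X \cap \Pi_1^{-} \cap \Pi_2^{-}$; this is a quarter of a catenoid.

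All of the spectral content is packaged into the half-bubble classification and Theorem~\ref{thm: no index two}, so I expect the main obstacle to be the geometric bookkeeping used to close each case. Specifically, one must check that $\sigma_1(\Lambda) \subset \Pi_1^{+}$ meets $\Pi_1$ only along $\partial\Lambda \cap \Pi_1$, so that $\Sigma \cap \Pi_1^{-}$ is exactly $\Lambda$ rather than a larger subset of the quarter-space (and similarly $X \cap \Pi_2^{-} = \Sigma$), and one must run through the admissible pairs of commuting reflective symmetries of a catenoid to confirm that the four fundamental pieces are mutually congruent in each configuration. These are routine but call for care with closures and with the position of the edge $\Pi_1 \cap \Pi_2$ relative to the axis of $X$.
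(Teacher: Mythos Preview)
Your argument is correct and follows exactly the paper's route: reflect $\Lambda$ across $\Pi_1$ to obtain the half-bubble $\Sigma$, use the identity $\Index(\Sigma) = \Index_{NN}(\Lambda) + \Index_{DN}(\Lambda)$ together with $\Index_{DN}(\Lambda) \le \Index_{NN}(\Lambda)$, and then appeal to the half-bubble classification from \cite{ambrozio2019bubbling} and Theorem~\ref{thm: no index two}. The paper's own proof is terser and simply omits the geometric bookkeeping you carry out in the last paragraph.
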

\begin{proof}
Consider the half-bubble $\Sigma$ obtained by reflection across $\Pi_1$. Then we have the equality $$\Index_{NN}(\Lambda) + \Index_{DN}(\Lambda) = \Index(\Sigma).$$

If $\Lambda$ has index 0, then $\Index_{DN}(\Lambda) = 0$ as well, so $\Index(\Sigma) = 0$. Then by \cite[Corollary 22]{ambrozio2019bubbling}, this must be the plane.

If $\Lambda$ has index 1, then $\Index_{DN}(\Lambda) \le \Index(\Lambda) =1$, which shows $\Index(\Sigma) = 1$ or 2. By Theorem~\ref{thm: no index two}, we must have $\Index(\Sigma) = 1$. Then by \cite[Corollary 24]{ambrozio2019bubbling}, $\Sigma$ is isometric to a half-catenoid, implying $\Lambda$ is isometric to a quarter of a catenoid.
\end{proof}
\begin{rem}
Tran's result (Theorem~\ref{thm: Tran}) can be generalized to this setting with basically the same proof, and we have the four inequalities 
$$\Index_{NN}(\Lambda) \ge  \Index_{ND}(\Lambda) + \nul_{ND}(\Lambda),$$
$$\Index_{NN}(\Lambda) \ge  \Index_{DN}(\Lambda) + \nul_{DN}(\Lambda),$$
$$\Index_{ND}(\Lambda) \ge  \Index_{DD}(\Lambda) + \nul_{DD}(\Lambda),$$
$$\Index_{DN}(\Lambda) \ge  \Index_{DD}(\Lambda) + \nul_{DD}(\Lambda).$$
This shows that for the corresponding symmetric bubble $X$, we have
$$\Index_{++}(X) \ge  \Index_{+-}(X) + \nul_{+-}(X),$$
$$\Index_{++}(X) \ge  \Index_{-+}(X) + \nul_{-+}(X),$$
$$\Index_{+-}(X) \ge  \Index_{--}(X) + \nul_{--}(X),$$
$$\Index_{-+}(X) \ge  \Index_{--}(X) + \nul_{--}(X).$$
\end{rem}
As an application of these inequalities, consider the family of surfaces $\{\Sigma_t\}_{t\ge 1}$ as in Proposition \ref{cor: Costa}. After a Euclidean motion, we can assume that all of the ends of $\Sigma_t$ are parallel to the $z$-axis and $\Sigma_t$ is symmetric under reflection across $\Pi_1$ and $\Pi_2$, where $\Pi_1$ is the $xy$-plane and $\Pi_2$ is the $yz$-plane. Let $\Lambda$ be the quarter-bubble contained in the quarter-space $\{x \ge 0, y\ge 0\}$. We can then refine the bound given in Proposition \ref{cor: Costa}:
\begin{prop}
We can bound the indices and nullites of $\Sigma_t$ as follows:
\begin{table}[!ht]
\begin{tabular}{|l|l|l|l|l|}\hline
$\Sigma_t$        & $++$                   & $+-$                   & $-+$                   & $--$                   \\\hline
$\nullity$ & $\ge 1 $ & $\ge 1$ & $\ge 1$ & $\ge 1$ \\\hline
$\Index$   & $\ge 2$ & $\ge 1$ & $\ge 1$ & $0$                    \\\hline
\end{tabular}
\end{table}
\end{prop}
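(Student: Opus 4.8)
The plan is to read off the bound $\Index_{++}(\Sigma_t)\ge 2$ and three of the four nullity bounds directly, to establish the two remaining ``$--$'' entries separately, and then to obtain $\Index_{+-}(\Sigma_t),\Index_{-+}(\Sigma_t)\ge 1$ by a short bootstrap from the four inequalities in the preceding Remark.

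First I would normalize so that the ends of $\Sigma_t$ are parallel to the $z$-axis and $\sigma_1,\sigma_2$ are the reflections across $\Pi_1=\{z=0\}$ and $\Pi_2=\{x=0\}$; the common end-axis is then orthogonal to $\Pi_1$ and contained in $\Pi_2$, so $\Sigma_t$ falls under case (a) relative to $\Pi_1$ and under case (b) relative to $\Pi_2$. Since the Costa quarter $\Lambda$ is neither a quarter of a plane nor a quarter of a catenoid, the classification of quarter-bubbles of index $\le 1$ (the preceding Proposition) gives $\Index_{++}(\Sigma_t)=\Index_{NN}(\Lambda)=\Index(\Lambda)\ge 2$. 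For the nullities, the bounded Jacobi fields $\langle e_2,N\rangle$, $\langle e_1,N\rangle$ and $\langle e_3,N\rangle$ are of types $(+,+)$, $(+,-)$ and $(-,+)$ with respect to $(\sigma_1,\sigma_2)$ — the sign computations are exactly those of Lemma~\ref{lem: bubble nullity}, and the rotational Jacobi field $\det(p,N,e_3)=x_1\langle e_2,N\rangle-x_2\langle e_1,N\rangle$ is again of type $(+,-)$ — so $\nul_{++}(\Sigma_t)\ge 1$, $\nul_{+-}(\Sigma_t)\ge 1$ and $\nul_{-+}(\Sigma_t)\ge 1$.

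The two entries that require real input are $\nul_{--}(\Sigma_t)\ge 1$ and $\Index_{--}(\Sigma_t)=0$, and I expect the former to be the main obstacle: in contrast to the other three nullity bounds it is not produced by the rigid-motion Jacobi fields, so one has to exhibit a nonzero bounded Jacobi field on $\Sigma_t$ odd under both reflections — equivalently, a nonzero bounded Dirichlet Jacobi field on $\Lambda$ — which one extracts from the deformation of the family together with the analysis of its symmetry type (or from the known nullity of the Costa surface). Granting $\nul_{--}(\Sigma_t)\ge 1$, the vanishing $\Index_{--}(\Sigma_t)=0$ follows by contradiction: if $\Index_{--}(\Sigma_t)\ge 1$, the third and fourth inequalities of the Remark give $\Index_{+-}(\Sigma_t),\Index_{-+}(\Sigma_t)\ge\Index_{--}(\Sigma_t)+\nul_{--}(\Sigma_t)\ge 2$, whence the first inequality gives $\Index_{++}(\Sigma_t)\ge\Index_{+-}(\Sigma_t)+\nul_{+-}(\Sigma_t)\ge 3$, so $\Index(\Sigma_t)=\Index_{++}+\Index_{+-}+\Index_{-+}+\Index_{--}\ge 8$, contradicting the known value $\Index(\Sigma_t)=5$. (Alternatively one could try to show $\Index_{DD}(\Lambda)=0$ directly, i.e.\ that $\Lambda$ is stable under variations vanishing on all of $\partial\Lambda$.)

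Finally, with $\nul_{--}(\Sigma_t)\ge 1$ and $\Index_{--}(\Sigma_t)=0$ in hand, the third and fourth inequalities of the Remark read $\Index_{+-}(\Sigma_t)=\Index_{ND}(\Lambda)\ge\Index_{DD}(\Lambda)+\nul_{DD}(\Lambda)=0+\nul_{--}(\Sigma_t)\ge 1$ and $\Index_{-+}(\Sigma_t)=\Index_{DN}(\Lambda)\ge 1$, which completes the table (and in particular recovers $\Index(\Sigma_t)\ge 2+1+1+0=4$, i.e.\ Proposition~\ref{cor: Costa}). So the essential content is the existence of the odd-odd bounded Jacobi field and the vanishing of $\Index_{--}$; everything else is bookkeeping with inequalities already at our disposal.
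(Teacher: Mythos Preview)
Your computation of the symmetry types is correct for your choice of planes, and that is exactly where the approach diverges from the paper's. You took $\Pi_1=\{z=0\}$ (the end axis is orthogonal to it), following the text before the proposition literally; but that line contains a typo --- note that $\Lambda$ is declared to sit in $\{x\ge 0,\,y\ge 0\}$, which forces both bounding planes to be \emph{vertical} (the $xz$- and $yz$-planes). With two planes each containing the end axis $e_3$, the rotational field $\det(p,N,e_3)=x_1n_2-x_2n_1$ is odd under \emph{both} reflections, so $\nul_{--}(\Sigma_t)\ge 1$ is immediate from a rigid-motion Jacobi field after all --- no appeal to the deformation family or to external nullity computations is needed. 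This is the step you flagged as the main obstacle, and it disappears once the planes are chosen correctly.

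The paper then handles $\Index_{--}(\Sigma_t)=0$ directly rather than by contradiction: Choe's description of the nodal set of $\det(p,N,\lambda)$ together with the symmetry of $\Sigma_t$ shows that this field is strictly positive on $\mathring\Lambda$, so by Courant's nodal domain theorem it must be the first Dirichlet eigenfunction on $\Lambda$, whence $\Index_{DD}(\Lambda)=0$. Your contradiction argument via $\Index(\Sigma_t)=5$ is valid but imports a much stronger result (the exact index computation of Nayatani/Morabito), and your derivation of $\Index_{++}\ge 2$ via the preceding quarter-bubble classification is ultimately circular if you also want to ``recover'' Proposition~\ref{cor: Costa}, since that classification goes through Theorem~\ref{thm: no index two} and hence through the very index bound on the Costa family you are aiming at. In the paper, by contrast, once $\nul_{--}\ge 1$ and $\Index_{--}=0$ are in hand all four index entries follow purely from the chain of inequalities in the Remark, with no outside input.
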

\begin{proof}
Looking at the symmetries of the Jacobi fields, we find that 
$$\langle e_3, N\rangle  \in (C^\infty)_{++} , \hspace{1em} \langle e_2, N\rangle \in (C^\infty)_{+-} , \hspace{1em} \langle e_1, N\rangle \in (C^\infty)_{-+} , \hspace{1em} \det(p,N,\lambda) \in (C^\infty)_{--}.$$
This gives the desired bounds on nullity. 

Using \cite[Corollary 5 and Proposition 2]{choe1990index} and the symmetry of the surface, we have that $\det(p,N,\lambda)$ is strictly positive on the interior of the quarter-bubble $\Lambda$. By Courant's nodal domain theorem \cite{courant1953methods}, this shows $\det(p,N,\lambda)$ is the first eigenfunction of the Jacobi operator on  $\Lambda$ with double Dirichlet boundary conditions, which implies $\Index_{--}(\Sigma_t) = \Index_{DD}(\Lambda) = 0$. Applying the above inequalities yields the other bounds on the indices.
\end{proof}
\begin{rem}
For the Costa surface $\Sigma_1$, the actual decomposition of the nullites and indices is given by

\begin{table}[!ht]
\begin{tabular}{|l|l|l|l|l|}\hline
$\Sigma_1$        & $++$                   & $+-$                   & $-+$                   & $--$                   \\\hline
nullity & $1 $ & $1$ & $ 1$ & $1$ \\\hline
index   & $3$ & $1$ & $1$ & 0                    \\\hline
\end{tabular}
\end{table}
which follows by examining the computations of Nayatani \cite[Section 5]{nayatani1993morse}.
\end{rem}

\bibliographystyle{alpha}
\bibliography{main}
\end{document}